\theoremstyle{plain}
\newtheorem{theorem}{Theorem}[section]
\theoremstyle{definition}
\theoremstyle{remark}
\numberwithin{equation}{section}
\newcommand{\beqa}{\begin{eqnarray}}
\newcommand{\eeqa}{\end{eqnarray}}
\newcommand{\beqas}{\begin{eqnarray*}}
\newcommand{\eeqas}{\end{eqnarray*}}
\newcommand{\pr}{\partial}
\newcommand{\prg}{\nabla}
\newcommand{\bx}{\mathbf{x}}
\newcommand{\bd}{\mathbf{d}}
\newcommand{\beq}{\begin{equation}}
\newcommand{\eeq}{\end{equation}}
\newcommand{\beqs}{\begin{equation*}}
\newcommand{\eeqs}{\end{equation*}}
\newcommand{\B}{\mathbf}
\newcommand{\bW}{\B{W}}
\newcommand{\bM}{\B{M}}
\newcommand{\sbM}{{\scaleto{\bM}{4pt}}}
\newcommand{\sbW}{{\scaleto{\bW}{4pt}}}
\newcommand{\blam}{\boldsymbol{\lambda}}
\newcommand{\bmu}{\boldsymbol{\mu}}
\newcommand{\la}{\langle}
\newcommand{\ra}{\rangle}
\def\R{{\mathbb{R}}}
\def\DIV{\mathrm{div}}
\def\DIAG{\operatorname{diag}}
\colorlet{colorYO}{red}
\colorlet{colorAL}{blue}
\begin{document}

%
%
\title[Time-domain FWI for poroelastic equations]{On the time-domain full waveform inversion for time-dissipative and dispersive poroelastic media}
 \author{Miao-jung Yvonne Ou}
 \address{Department of Mathematical Sciences, University of Delaware,
 508 Ewing Hall, Newark, Delaware 19716 USA}
 \email{mou@udel.edu}
 \author{Petr Plech\'a\v{c}}
 \address{Department of Mathematical Sciences, University of Delaware,
510 Ewing Hall, Newark, Delaware 19716 USA}
\email{plechac@udel.edu}
\author{Jiangming Xie}
\address{Department of Mathematical Sciences, Tsinghua University, Beijing 10084, China}
 \email{xiejiangming@mail.tsinghua.edu.cn}
\date{\today}
\maketitle
\begin{center}
\vspace{-0.2in}

\emph{Dedicated to Professor Robert P. Gilbert on the occasion of his 90th birthday}
 \end{center}

%
\vspace{-0.2in}

\begin{abstract}
This paper concerns the Time-Domain Full Waveform Inversion (FWI) for dispersive and dissipative poroelastic materials. The forward problem is an initial boundary value problem (IBVP) of the poroelastic equations with a memory term; the FWI is formulated as a minimization problem of a least-square misfit function with the (IBVP) as the constraint. In this paper, we derive the adjoint problem of this minimization problem, whose solution can be applied to computed the direction of steepest descent in the iterative process for minimization. The adjoint problem has a similar numerical structure as the forward problem and hence can be solved by the same numerical solver. Because the tracking of the energy evolution plays an important role in the FWI for dissipative and dispersive equations, the energy analysis of the forward system is also carried out in this paper.
\end{abstract}

 \noindent{\bf Keywords:}
Time-domain full waveform inversion, time dispersive and dissipative (TDD) system, poroelastic equations, adjoint problems.
\vspace{0.1in}

\noindent{\bf AMS Classification:} 35M10

%
%
%
\section{Introduction}
For the application of poroelastic wave equations in biomedical and geological research, a time-domain simulation is more feasible because the input wave signal is usually a pulse of finite band-width, instead of a monochromatiic signal. The dispersive nature of a poroelastic material is encoded in the memory term of the poroelastic equations.  The presence of the memory term poses a significant challenge in developing a time domain numerical solver for the poroelastic equations. This challenge can be handled by the method of auxiliary variables as long as a high precision approximation of the memory kernel function can be obtained. Initiated in \cite{ou2014on-reconstructi} and further developed In \cite{ou2018augmented}, the link between the Laplace transform of the memory kernel and the Herglotz-Nevanlinna functions was utilized to obtain a highly accurate pole-residue approximation, which makes it possible for replacing the memory terms with sums of auxiliary variables that satisfy ordinary differential equations. The new system is termed \emph{augmented Biot equations} because it contains the new auxiliary variables and the corresponding  governing equations. By design, the augmented system has no explicit memory terms and can be solved much more efficiently than the original system.

Incorporating the dissipative and the dispersive mechanism of the fluid saturated poroelastic media in the forward wave solver is important in recovering poroelastic material properties from the probing waves (the inverse problem).  The full waveform inversion (FWI) has become a powerful tool in the geological exploration since the 1980s \cite{doi:10.1190/geo2013-0030.1,Tarantola1982Generalized-Non,Jeroen-Tromp2005Seismic-tomogra,Pengliang-Yang2016A-review-on-the,Qingjie-Yang2021Frequency-domai,Engquist2019Seismic-inversi,Engquist2019Seismic-imaging}. FWI has been successfully applied in seismic exploration with  (visco)-elastic  models \cite{Pengliang-Yang2016A-review-on-the} in recovering certain material properties. However, certain important poroelastic properties such as porosity and relative velocity between fluid and solid cannot be recovered from these models. For example, it is reported  in \cite{Chotiros2002An-inversion-fo} that the measured parameters of water-saturated sand in the laboratory is better predictions by a poroelastic Biot model than a a viscoelastic model. In \cite{Morency2011Acoustic-elasti}, it is shown that a poroelastic signature in time-lapse migration data from CO2 sequestration crosswell monitoring could not be explained with acoustic or elastic models. For biomechanics of cancellous bones \cite{cowin1999bone-poroelasti}, \cite{wear2010decomposition-o}, \cite{Benalla2013Dynamic-permeab}, it is conjectured that the fluid-solid interaction in the pore space (the viscodynamics) might be the signaling mechanism for bone reconstruction. At the macroscopic level, the viscodynamics of a poroelastic material is described by the dissipative and dispersive coefficients in the poroelastic wave equation with memory terms\cite{owan1997mechanotransduc}.

FWI has been applied to the one-dimensional Biot-JKD equations to recover several prorelastic parameters from numerical simulated data in a
water tank setting \cite{buchanan2011recovery-of-the}. Due to the complexity of the poroelastic equations with memory terms, FWI has not been applied to the poroelastic wave equations in dimensions higher than one until recently. Published in 2021, the first paper \cite{Qingjie-Yang2021Frequency-domai} for the 2D poroelastic FWI considers the frequency domain instead of the time domain.

The time-domain augmented Biot solver developed in \cite{Xie-Ou-Xu-2019} provides an efficient way to handle the poroelastic wave equations with memory terms. As is shown in Section \ref{sec adjoint equation}, the same solver can be used to solve the adjoint problem, whose output can be used to compute the gradient of the objective function with respect to the material properties.  

The paper is organized as follows. The poroelastic equations are stated in Section \ref{model}, where both the JKD model and the general model, together with their energy analyses are presented. The energy analysis will play an important role in the stability of solving the adjoint problem.  In Section \ref{sec adjoint equation}, the FWI problem for the augmented Biot equation is presented and the corresponding adjoint problem is derived. The algorithm for computing the poles and residues of the memory functions are given in Section~\ref{Algorithm}. 
Section~\ref{conclu} includes a conclusion of this paper and a list of possible future work. 
\section{\label{model}The mathematical model for wave propagation in poroelastic materials}
Introduce the solid velocity $\mathbf{v}$ and the relative fluid velocity $\mathbf{q}$, which are defined by
\begin{equation}
\label{Eq:velocity}
\mathbf{v}:=\frac{\partial \mathbf{u}}{\partial t} , \ \mathbf{q}:=\frac{\partial \mathbf{w}}{\partial t},
\end{equation}
where $\mathbf{u}$ is the skeleton solid displacement vector and $\mathbf{w}=\phi(\mathbf{U}-\mathbf{u})$ is  the relative motion of the fluid scaled by the porosity $\phi$ with $\mathbf{U}$ being the fluid displacement vector. In terms of these notations, the variation in fluid content is given by   $\zeta=-\nabla\cdot\mathbf{w}$.
\subsection{The constitutive relation for the plane strain of a transversely isotropic material}
Let the $z$ axis be the symmetry axis, then the stress-strain relations for a transversely isotropic elastic material reads 
\begin{equation}
\label{Eq:stree-strainelastic}
\mbox{\boldmath{$\tau$}} = \mathbf{C} \mbox{\boldmath{$\epsilon$}},
\end{equation}
where the elastic stiffness tensor $\mathbf{C}$, the stress tensor \mbox{\boldmath{$\tau$}}, and the strain tensor \mbox{\boldmath{$\epsilon$}} are defined as
\begin{eqnarray}
\label{stress-strain}
\mathbf{C}=
\left(
\begin{array}{ccc}
{c_{11}} & {c_{13}} & {0} \\
{c_{13}} & {c_{33}} & {0}  \\
{0} & {0} & {c_{55}}
\end{array}
\right),
\quad \mbox{\boldmath{$\tau$}}=(\tau_{11}\ \tau_{33}\   \tau_{13} )^T,  \quad
\mbox{\boldmath{$\epsilon$}}=(\epsilon_{11}\  \epsilon_{33}\  2\epsilon_{13}\ )^T,
\end{eqnarray}
with $\epsilon_{ij}=\frac{1}{2}\left(\partial_{i}u_{j}+\partial_{j}u_{i}\right)$, j=1,3. The constitutive relation for the poroelastic material is
\begin{eqnarray}
\label{tau}
\boldsymbol{\tau}&=&\mathbf{C} \boldsymbol{\epsilon}-\boldsymbol{\beta} p
=\mathbf{C}^u\boldsymbol{\epsilon}-M\boldsymbol{\beta}\zeta,\\
\label{p}
p&=&M\left(\zeta-\boldsymbol{\beta}^{T} \boldsymbol{\epsilon}\right),
\end{eqnarray}
where the $\mathbf{C}^u$ is the undrained elastic matrix. More precisely,
\begin{eqnarray*}
\label{coefficients}
 \mathbf{C}^u&=& \mathbf{C}+M\boldsymbol{\beta}\boldsymbol{\beta}^T,~~\boldsymbol{\beta}:= (\beta_{1}, ~\beta_{3}, ~0)^T, \\
  \beta_{1}: &=& 1-\frac{c_{11}+c_{12}+c_{13}}{3K_{s}}, ~\beta_{3}: = 1-\frac{2c_{13}+c_{33}}{3K_{s}}, \\
  M: &=& \frac{K^2_{s}}{K_{s}\left[1+\phi(K_{s}/K_{f}-1)\right]-\left(2c_{11}+c_{33}+2c_{12}+4c_{13}\right)/9},
\end{eqnarray*}
with $K_{s}$ and $K_{f}$ being the bulk modulus of the skeleton and pore fluid, respectively.

\subsection{\label{motion for Biot JKD }Equation of motions}

The equations of motion for the solid part are given by the conservation of momentum as
\begin{equation}
\label{eq:solid momentum}
\rho \frac{\partial{\mathbf{v}}}{\partial t}+\rho_{f} \frac{\partial \mathbf{{q}}}{\partial t}=\nabla\cdot \boldsymbol{\overline{\tau}},
\end{equation}
where $\rho_{s}$  is the density of constituent solid, $\rho=(1-\phi)\rho_{s}+\phi \rho_{f}$ is the bulk density of the medium and $\boldsymbol{\overline{\tau}}=
\left[ \begin{array}{cc}{\tau_{11}} & {\tau_{13}} \\ {\tau_{13}} & {\tau_{33}}\end{array}\right]
$ with the action $\nabla\cdot \boldsymbol{\overline{\tau}}:=\left(\partial_x\tau_{11}+\partial_z\tau_{13}, \ \partial_{x}\tau_{13}+\partial_z\tau_{33} \right)^T$ is the plane stress tensor.

The equations of motion for the fluid part is described by the generalized Darcy's law
\begin{equation}
\label{eq:fluidmotion Darcy}
\rho_f \frac{\partial{\mathbf{v}}}{\partial t}+{\DIAG}\left(\frac{\rho_f}{\phi} \right) \check{\boldsymbol{\alpha}} \star\frac{\partial \mathbf{q}}{\partial t}=-\nabla p,
\end{equation}
where $\star$ denotes the time-convolution operator and $\check{\boldsymbol{\alpha}}=(\check \alpha_1,\check \alpha_3)^T$ is the inverse Fourier-Laplace transform of the dynamic tortuosity $\boldsymbol{\alpha}(s)$ with $s=-i\omega$. The  Fourier-Laplace transform of a function $f(t)$ is defined as
\beq
\hat{f}(\omega):={\mathcal{L}}[f](s=-i\omega):=\frac{1}{\sqrt{2\pi}} \int_{0}^{\infty} f(t)e^{-st} dt .
\label{one-side-LT}
\eeq
For the well-known low-frequency Biot model \cite{biot1956theory-high},  the generalized Darcy's law is 
\[ 
-\pr_{x_i} p=\rho_f \pr_t v_i +  \left(\frac{\rho_f}{\phi} \right) {\alpha_\infty}_i \pr_t q_i +\left(\frac{\eta}{K_{0i}}\right)q_i\label{darcy-x}
\] 
, which has no memory term and corresponds to  
\[
\check{\alpha}_j(t)=\alpha_{\infty j} \delta(t)+\frac{\eta \phi}{K_{0j} \rho_f} H(t)\Longleftrightarrow  \alpha_j(\omega)={\alpha_\infty}_j+\frac{\eta \phi/(K_{0j} \rho_f)}{-i\omega}.
\]
Historically, to address the discrepancy between the lab measurement and the prediction by the low-frequency Biot model, many forms of $\alpha(\omega)$ have been proposed \cite{biot1956theory-of-propa} \cite{johnson1987theory-of-dynam} \cite{avellaneda1991rigorous-link-b} \cite{Pride1992Modeling-The-Dr}. Among them, the JKD model by Johnson, Koplik and Dashen  \cite{johnson1987theory-of-dynam} has been widely used in applications. The most general one is presented in \cite{avellaneda1991rigorous-link-b}, which does not rely on any specific form of the  
permeability/tortuosity function. In \cite{ou2014on-reconstructi}, an integral representation formula for $\alpha(\omega)$ was derived based on the result in \cite{avellaneda1991rigorous-link-b} and the Stieltjes function theory. 
\subsection{\label{Biot JKD }The Biot-JKD equations}
The JKD model \cite{johnson1987theory-of-dynam} is
\begin{equation}
\label{Eq:Tortuosity}
\alpha_j(s)=T_j(s):=\alpha_{\infty j}+\frac{\eta\phi}{s \kappa_{j}\rho_f}\left(1+s\frac{4 \alpha^2_{\infty j}\kappa^2_{j}\rho_f}{\eta \Lambda_j^2\phi^2}\right)^{\frac{1}{2}},\quad s:=-i\omega
\end{equation}
where
\begin{equation*}
\Lambda_{j}=\sqrt{\frac{4 \alpha_{\infty j} \kappa_{j}}{\phi \text{P}_{j}}},~j=1,3,
\end{equation*}
is the viscous characteristic length and $\text{P}_j=0.5$ the Pride numberin the $j$-th direction. For the JKD model, equation  (\ref{Eq:Tortuosity}) reads
\begin{equation}
\label{eq:HighDarcy}
\rho_f \frac{\partial{\mathbf{v}}}{\partial t}+{\DIAG}\left(\frac{\alpha_{\infty j}\rho_f}{\phi} \right) \frac{\partial{\mathbf{q}}}{\partial t}+ \nabla p=-{\DIAG}\left(\frac{\eta}{\kappa_j \sqrt{\lambda_j}} \right) \left(\partial_t+\lambda_j\right)^{1/2}\mathbf{q},~j=1,3,
\end{equation}
where $\lambda_j=\frac{\eta\phi^2\Lambda_j^2}{4\alpha^2_{\infty j}\kappa_j^2\rho_f}$.

Taking  derivative on both sides (\ref{tau}) and (\ref{p}) with resect to time $t$, and taking into account the equation of motions, the Biot-JKD equations are as follows
\begin{empheq}[left=\empheqlbrace]{align}
&\frac{\partial \boldsymbol{\tau}}{\partial t}=\frac{\partial }{\partial t}\left(\mathbf{C}^u\boldsymbol{\epsilon}-M\boldsymbol{\beta}\zeta\right),\label{partialtau}\\
&\frac{\partial p}{\partial t}=\frac{\partial }{\partial t}\left(M\left(\zeta-\boldsymbol{\beta}^{T} \boldsymbol{\epsilon}\right)\right),\label{partialp}\\
&\rho \frac{\partial{\mathbf{v}}}{\partial t}+\rho_{f} \frac{\partial \mathbf{{q}}}{\partial t}=\nabla\cdot \boldsymbol{\overline{\tau}},\label{eq:eqmomentum}\\
&\rho_f \frac{\partial{\mathbf{v}}}{\partial t}+{\DIAG}\left(m_j \right) \frac{\partial{\mathbf{q}}}{\partial t}+ \nabla p=-{\DIAG}\left(\frac{\eta}{\kappa_j \sqrt{\lambda_j}}  \left(\partial_t+\lambda_j\right)^{1/2}\right)\mathbf{q}, \label{eq:eqdarcy}
\end{empheq}
where $\left(\partial_t+\lambda_j\right)^{1/2}$ is a shift fractional derivative operator  and $m_j=\alpha_{\infty j}\rho_f/\phi >0$.
\subsection{Energy analysis of the Biot-JKD equations}
Using the Caputo fractional derivative
\begin{equation}
\label{Caputo fractional}
\frac{d^{\alpha}f(t)}{d t^{\alpha}}=\frac{1}{\Gamma(1-\alpha)}t^{-\alpha}\star\frac{\partial f}{\partial t},
\end{equation}
we can explicitly derive the shifted fractional derivative as
\begin{equation}
\label{eq:DAIntg}
\begin{aligned}
(\partial_t+\lambda_j)^{1/2}q_i=\frac{2}{\pi}\int_0^{\infty}\psi_j(y,t)dy,~j=1,3,
\end{aligned}
\end{equation}
where the auxiliary variables are defined as
\begin{equation}
\begin{aligned}
\psi_j(y,t)=\int_0^te^{-(y^2+\lambda_j)(t-\tau)}\left[\lambda_j q_j(\tau)+\partial_{\tau} q_j(\tau)\right] d\tau.
\end{aligned}
\end{equation}
It is easy to  check that the auxiliary variables satisfy the following equations
\begin{eqnarray}
\begin{aligned}
\label{eq:auxvarrho}
\left\{\begin{array}{cc}
\frac{\partial \psi_j}{\partial t}=-\left(y^2+\lambda_j\right)\psi_j+\left[\lambda_j q_j+\partial_t q_j\right],\\
\psi_j(y,0)=0,\,j=1,3.
\end{array}\right.
\end{aligned}
\end{eqnarray}
\begin{theorem}
\label{thm:JKD}
Consider the Biot-JKD equation (\ref{partialtau})-(\ref{eq:eqdarcy}), define
\begin{eqnarray}
\label{def:OriginalE1}
\mathcal{E}_{1}&=&\frac{1}{2} \int_{\mathbb{R}^{2}}\rho \mathbf{v}^{T} \mathbf{v}+ 2 \rho_{f} \mathbf{v}^{T} \mathbf{q}+\mathbf{q}^{T} {\DIAG}({m_j})\mathbf{q} ~ d x d z,\\
\label{def:OriginalE2}
\mathcal{E}_{2}&=&\frac{1}{2} \int_{\mathbb{R}^{2}}\left((\boldsymbol{\tau}+p \boldsymbol{\beta})^{T} \boldsymbol{C}^{-1}(\boldsymbol{\tau}+p \boldsymbol{\beta})+\frac{1}{M} p^{2}\right) dx d z,\\
\label{E3}
\mathcal{E}_3&=&\frac{\eta}{\pi}\int_{\mathbb{R}^{2}}\int_0^{\infty}(\mathbf{q}-\boldsymbol{\psi})^T{\DIAG}
\left(\frac{1}{\kappa_j\sqrt{\lambda_j}(y^2+2\lambda_j)}\right)(\mathbf{q}-\boldsymbol{\psi})dydxdz,
\end{eqnarray}
then $\mathcal{E}=\mathcal{E}_1+\mathcal{E}_2+\mathcal{E}_3$ is the energy function, which satisfies
\begin{equation}\label{def:DEnergy}
\begin{aligned}
\frac{d}{d t}\mathcal{E}&=
-\frac{2\eta}{\pi}\int_{\mathbb{R}^{2}}
\int_{0}^{\infty}\boldsymbol{\psi}^T
{{\DIAG}
\left(\frac{y^2+\lambda_j}{\kappa_j\sqrt{\lambda_j}(y^2+2\lambda_j)}\right)
\boldsymbol{\psi}dydxdz}\\
&\quad-\frac{2\eta}{\pi}\int_{\mathbb{R}^{2}}
\int_{0}^{\infty}\mathbf{q}^T{\DIAG}
\left(\frac{\lambda_j}{\kappa_j\sqrt{\lambda_j}(y^2+2\lambda_j)}\right)\mathbf{q}dydxdz\le0,
\end{aligned}
\end{equation}
where $\boldsymbol{\psi}=\left(\psi_1,~\psi_3\right)^T$.
\end{theorem}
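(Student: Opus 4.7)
The plan is to differentiate each of $\mathcal{E}_1$, $\mathcal{E}_2$, $\mathcal{E}_3$ in time separately, use the PDEs to eliminate the time derivatives of the state variables, integrate by parts in space over $\mathbb{R}^2$ (with decay at infinity killing all boundary contributions), and cancel the non-dissipative pieces pairwise; the designs of $\mathcal{E}_2$ and $\mathcal{E}_3$ are engineered precisely to enable those cancellations.

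For $d\mathcal{E}_1/dt$ I would dot (\ref{eq:eqmomentum}) with $\mathbf{v}$ and (\ref{eq:eqdarcy}) with $\mathbf{q}$, add them, and integrate by parts to produce
\[
-\int_{\mathbb{R}^2}\boldsymbol{\overline{\tau}}\!:\!\nabla\mathbf{v}\,dxdz + \int_{\mathbb{R}^2} p\,\nabla\!\cdot\!\mathbf{q}\,dxdz - \int_{\mathbb{R}^2}\mathbf{q}^T\DIAG\!\left(\tfrac{\eta}{\kappa_j\sqrt{\lambda_j}}\right)(\partial_t+\lambda_j)^{1/2}\mathbf{q}\,dxdz.
\]
For $d\mathcal{E}_2/dt$ I would exploit the identity $\boldsymbol{\tau}+p\boldsymbol{\beta}=\mathbf{C}\boldsymbol{\epsilon}$ from (\ref{tau}) (so the first integrand of $\mathcal{E}_2$ reduces to $\boldsymbol{\epsilon}^T\mathbf{C}\boldsymbol{\epsilon}$), invoke (\ref{partialp}) to write $M^{-1}\partial_t p=\partial_t\zeta-\boldsymbol{\beta}^T\partial_t\boldsymbol{\epsilon}$, and recombine using $\partial_t\zeta=-\nabla\!\cdot\!\mathbf{q}$ and the symmetry of $\bar{\boldsymbol{\tau}}$ to obtain $\int\boldsymbol{\overline{\tau}}\!:\!\nabla\mathbf{v}\,dxdz - \int p\,\nabla\!\cdot\!\mathbf{q}\,dxdz$. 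This cancels the first two terms from $d\mathcal{E}_1/dt$ and leaves only the Darcy dissipation.

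For $d\mathcal{E}_3/dt$, substituting (\ref{eq:auxvarrho}) yields the pointwise identity $\partial_t q_j - \partial_t\psi_j = (y^2+\lambda_j)\psi_j - \lambda_j q_j$, so expanding the weighted quadratic form produces
\[
(q_j-\psi_j)(\partial_t q_j-\partial_t\psi_j) = -\lambda_j q_j^2 - (y^2+\lambda_j)\psi_j^2 + (y^2+2\lambda_j)\,q_j\psi_j.
\]
Dividing by $\kappa_j\sqrt{\lambda_j}(y^2+2\lambda_j)$ gives precisely the two negative definite integrands displayed in (\ref{def:DEnergy}) together with a $y$-independent cross term $q_j\psi_j/(\kappa_j\sqrt{\lambda_j})$ because the factor $(y^2+2\lambda_j)$ cancels. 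The $y$-integral of this cross term, with the prefactor $2\eta/\pi$, equals $+\int\mathbf{q}^T\DIAG(\eta/(\kappa_j\sqrt{\lambda_j}))(\partial_t+\lambda_j)^{1/2}\mathbf{q}\,dxdz$ by the representation (\ref{eq:DAIntg}), which exactly cancels the leftover Darcy dissipation from $d(\mathcal{E}_1+\mathcal{E}_2)/dt$. Summing produces (\ref{def:DEnergy}); nonpositivity is immediate since every factor in each remaining integrand is nonnegative.

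The main obstacle is recognizing that the weight $1/[\kappa_j\sqrt{\lambda_j}(y^2+2\lambda_j)]$ in $\mathcal{E}_3$ is calibrated so that the cross term from $\partial_t$ of the weighted square becomes $y$-independent, making the verbatim use of (\ref{eq:DAIntg}) possible, while simultaneously leaving the $\psi_j^2$ and $q_j^2$ residues with the explicit nonnegative weights displayed in (\ref{def:DEnergy}); any other weight would either break the application of (\ref{eq:DAIntg}) or spoil the signs. A secondary technical point is justifying the interchange of $\partial_t$ with $\int_0^\infty dy$ and the convergence of $\int_0^\infty\psi_j\,dy$, both of which follow from the $O(y^{-2})$ decay of $\psi_j$ at large $y$ inherent in the definition $\psi_j(y,t)=\int_0^t e^{-(y^2+\lambda_j)(t-\tau)}[\lambda_j q_j+\partial_\tau q_j]\,d\tau$.
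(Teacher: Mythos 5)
Your proposal is correct and follows essentially the same route as the paper: the same pairwise cancellation of $\int \boldsymbol{\overline{\tau}}\!:\!\nabla\mathbf{v}$ and $\int p\,\nabla\cdot\mathbf{q}$ between $d\mathcal{E}_1/dt$ and $d\mathcal{E}_2/dt$, and the same algebraic identity obtained from the auxiliary equation \eqref{eq:auxvarrho} (equivalently, multiplying it by $\mathbf{q}$ and $\boldsymbol{\psi}$ and subtracting) combined with the representation \eqref{eq:DAIntg} to absorb the fractional Darcy term into $d\mathcal{E}_3/dt$. The only piece of the paper's proof you omit is the verification that $\mathcal{E}_1$ is positive definite (via $\det\begin{pmatrix}\rho & \rho_f\\ \rho_f & m_j\end{pmatrix}>0$), which is needed to call $\mathcal{E}$ an energy and should be added.
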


\begin{proof}
First, we prove that the energy function  $\mathcal{E}$ is  positive definite. Obviously,  both $\mathcal{E}_2$ and $\mathcal{E}_3$ are  positive definite. Note that $\mathcal{E}_1$ can be expressed as
$$
\mathcal{E}_{1}=\frac{1}{2} \int_{\mathbb{R}^{2}} \widetilde{\mathbf{V}}_j^T \mathcal{M}_j\widetilde{\mathbf{V}}_j  d x d z,\quad \mathcal{M}_j=\left[ \begin{array}{cc}{\rho} & {\rho_f} \\ {\rho_f} & {m_j}\end{array}\right],\quad j=1,3,
$$
with $\widetilde{\mathbf{V}}_j=\left(v_j,~q_j\right)^T$.  Since  $\det(\mathcal{M}_j)>0$, $\mathcal{E}_1$ is positive definite.
Next, we prove that (\ref{def:DEnergy}) holds.  Multiplying (\ref{eq:solid momentum}) with $\mathbf{v}^T$ and integrating on $\mathbb{R}^2$, we obtain
\begin{equation}
\label{Mcn}
0=\int_{\mathbb{R}^{2}}\left(\rho \mathbf{v}^{T} \frac{\partial \mathbf{v}}{\partial t}+\rho_{f} \mathbf{v}^{T} \frac{\partial \mathbf{q}}{\partial t}-\mathbf{v}^{T}\nabla \cdot \boldsymbol{\overline{\tau}}\right) d x d z:=\sum_{k=1}^3I_k(\mathbf{v}).
\end{equation}
Clearly, it holds that
\begin{equation*}
I_1(\mathbf{v})=\int_{\mathbb{R}^{2}} \rho \mathbf{v}^{T} \frac{\partial \mathbf{v}}{\partial t} d x d z= \frac{1}{2}  \frac{d}{d t} \int_{\mathbb{R}^{2}} \rho\mathbf{v}^{T} \mathbf{v}d x d z.
\end{equation*}
Integrating by part and noticing  that $\mathbf{v}=\partial_t\mathbf{u}$, we have
\begin{equation*}
\begin{aligned}
I_3(\mathbf{v})&=-\int_{\mathbb{R}^{2}} \mathbf{v}^{T}(\nabla \cdot \boldsymbol{\overline{\tau}}) d x d z =\int_{\mathbb{R}^{2}} \boldsymbol{\tau}^{T} \frac{\partial \boldsymbol{\epsilon}}{\partial t} d x d z\\
&=\int_{\mathbb{R}^{2}} \boldsymbol{\tau}^{T}\boldsymbol{C}^{-1}\left( \frac{\partial \boldsymbol{\tau}}{\partial t}+\boldsymbol{\beta} \frac{\partial p}{\partial t}\right) d x d z \\
&= \frac{1}{2} \frac{d}{d t}\int_{\mathbb{R}^{2}} \boldsymbol{\tau}^{T} \boldsymbol{C}^{-1} \boldsymbol{\tau} d x d z+\int_{\mathbb{R}^{2}} \boldsymbol{\tau}^{T} \mathbf{C}^{-1} \boldsymbol{\beta} \frac{\partial p}{\partial t} d x d z \\
&= \frac{1}{2} \frac{d}{d t}\int_{\mathbb{R}^{2}}\left(\boldsymbol{\tau}^{T} \mathbf{C}^{-1} \boldsymbol{\tau}+2 \boldsymbol{\tau}^{T} \mathbf{C}^{-1} \boldsymbol{ \beta} p\right) d x d z-\int_{\mathbb{R}^{2}}\left(\frac{\partial \boldsymbol{\tau}}{\partial t}\right)^{T} \mathbf{C}^{-1} \boldsymbol{\beta} p d x d z,
\end{aligned}
\end{equation*}
where (\ref{tau}) is used in the third equation.

Multiplying the generalized Darcy's law (\ref{eq:eqdarcy}) with $\mathbf{q}^T$ and integrating on $\mathbb{R}^2$ lead to 
\begin{equation}
\begin{aligned}
\label{MDc}
0=&\int_{\mathbb{R}^{2}}\left[\rho_f \mathbf{q}^T \frac{\partial \mathbf{v}}{\partial t}+ \mathbf{q}^T {\DIAG}(m_j)\frac{\partial \mathbf{q}}{\partial t} \right.\\
&\left.+\mathbf{q}^T \nabla p +\mathbf{q}^T{\DIAG}\left(\gamma_j \left(\partial_t+\lambda_j\right)^{1/2}\right)\mathbf{q}\right]dxdz
:=\sum_{k=1}^4L_k(\mathbf{q}),
\end{aligned}
\end{equation}
where $\gamma_j=\frac{\eta}{\kappa_j \sqrt{\lambda_j}}$. We rewrite $L_2$ and $L_3$ as follows.
\begin{equation*}
L_2(\mathbf{q})=\int_{\mathbb{R}^{2}} \mathbf{q}^{T} \operatorname{diag}\left(m_{ i}\right) \frac{\partial \mathbf{q}}{\partial t} d x d z=
\frac{1}{2} \frac{d}{d t} \int_{\mathbb{R}^{2}} \mathbf{q}^{T} \operatorname{diag}\left(m_{i}\right) \mathbf{q} d x d z.
\end{equation*}
Integrating by part and using $\nabla \cdot \mathbf{q}=-\partial_t\zeta$ as well as (\ref{p}), we derive
\begin{equation*}
\begin{aligned}
L_3(\mathbf{q})&=\int_{\mathbb{R}^{2}} \mathbf{q}^{T} \nabla p d x d z =-\int_{\mathbb{R}^{2}} p \nabla \cdot \mathbf{q} dxdz
=\int_{\mathbb{R}^{2}} p\left(\frac{1}{M} \frac{\partial p}{\partial t}+\boldsymbol{\beta}^{T} \frac{\partial \boldsymbol{\epsilon}}{\partial t}\right) d x d z \\
&=\frac{1}{2} \frac{d}{d t}  \int \frac{1}{M} p^{2} d x d z+\int_{\mathbb{R}^{2}} p\boldsymbol{\beta}^{T} \mathbf{C}^{-1} \frac{\partial \boldsymbol{ \tau}}{\partial t}  d x d z+\int_{\mathbb{R}^{2}} \boldsymbol{\beta}^{T} \mathbf{C}^{-1} \boldsymbol{\beta} p \frac{\partial p}{\partial t} d x d z \\
&=\frac{1}{2} \frac{d}{d t}  \int \frac{1}{M} p^{2} d x d z+\int_{\mathbb{R}^{2}} \boldsymbol{\beta}^{T} \mathbf{C}^{-1} \frac{\partial \boldsymbol{\tau}}{\partial t} p dxdz
+\frac{1}{2} \frac{d}{d t}  \int_{\mathbb{R}^{2}} \boldsymbol{\beta}^{T} \mathbf{C}^{-1} \boldsymbol{\beta} p^{2} dxdz.
\end{aligned}
\end{equation*}
Adding (\ref{Mcn}) and (\ref{MDc}) and noting that the negative term in $I_3$ is cancelled by the second term in $L_3$ because of the symmetry of $\mathbf{C}$, we obtain
\begin{equation*}
\label{De12}
\begin{aligned}
0&=\frac{1}{2} \frac{d}{d t} \int_{\mathbb{R}^{2}} \left[\rho\mathbf{v}^{T} \mathbf{v}+ \mathbf{q}^{T} \DIAG(m_j) \mathbf{q} +2\rho_f\mathbf{v}^{T} \mathbf{q}\right]d x d z\\
&\quad + \frac{1}{2}\frac{d}{d t} \int_{\mathbb{R}^{2}} \left[\frac{1}{M} p^{2}+\boldsymbol{\beta}^{T}\mathbf{C}^{-1} \boldsymbol{\beta} p^{2}
+\left(\boldsymbol{\tau}^{T} \mathbf{C}^{-1} \boldsymbol{\tau}+2 \boldsymbol{\tau}^{T} \mathbf{C}^{-1} \boldsymbol{\beta} p\right)\right] d x d z\\
&\quad
+\int_{\mathbb{R}^{2}}\mathbf{q}^T{\DIAG}\left(\gamma_j \left(\partial_t+\lambda_j\right)^{1/2}\right)\mathbf{q}dxdz.
\end{aligned}
\end{equation*}
In terms of $\mathcal{E}_1$ and $\mathcal{E}_2$ and \eqref{eq:DAIntg}, the equation above can be expressed as
\begin{equation}
\begin{aligned}
\label{eq:dE121}
\frac{d}{dt}(\mathcal{E}_1+\mathcal{E}_2)&
=-\int_{\mathbb{R}^{2}}\mathbf{q}^T{\DIAG}\left(\gamma_j \left(\partial_t+\lambda_j\right)^{1/2}\right)\mathbf{q}dxdz\\
&=-\frac{2\eta}{\pi}\int_{\mathbb{R}^{2}}\int_{0}^{\infty}\mathbf{q}^T{\DIAG}
\left(\frac{1}{\kappa_j\sqrt{\lambda_j}}\right)\boldsymbol{\psi}(y,t) dy\,dxdz.
\end{aligned}
\end{equation}

To estimate the right hand side of (\ref{eq:dE121}), we multiply (\ref{eq:auxvarrho}) with $\mathbf{q}^T$ and $\boldsymbol{\psi}^T$ respectively, to obtain
\begin{numcases}{}
\label{eq:DerivAugd}
\mathbf{q}^T \frac{\partial \boldsymbol{\psi}}{\partial t}- \mathbf{q}^T \frac{\partial \mathbf{q}}{\partial t}+\mathbf{q}^T {\DIAG}({y^2+\lambda}_j) \boldsymbol{\psi}-\mathbf{q}^T{\DIAG}(\lambda_j)\mathbf{q}=0, \\
\label{eq:AugInitiald}
\boldsymbol{\psi}^T \frac{\partial \boldsymbol{\psi}}{\partial t}- \boldsymbol{\psi}^T \frac{\partial \mathbf{q}}{\partial t}+\boldsymbol{\psi}^T {\DIAG}({y^2+\lambda}_j) \boldsymbol{\psi}-\boldsymbol{\psi}^T{\DIAG}(\lambda_j)\mathbf{q}=0,\, j=1,3.
\end{numcases}
Subtracting (\ref{eq:AugInitiald}) from (\ref{eq:DerivAugd}) reveals that

\begin{equation*}
(\mathbf{q}-\boldsymbol{\psi})^T\frac{\partial}{\partial t}(\boldsymbol{\psi}-\mathbf{q})+(\mathbf{q}-\boldsymbol{\psi})^T{\DIAG}(y^2+\lambda_j)\boldsymbol{\psi}
-(\mathbf{q}-\boldsymbol{\psi})^T{\DIAG}(\lambda_j)\mathbf{q}=0,
\end{equation*}
which indicates
\begin{equation*}
\mathbf{q}^T{\DIAG}(y^2+2\lambda_j)\boldsymbol{\psi}
=\frac{1}{2}\frac{d}{dt}(\mathbf{q}-\boldsymbol{\psi})^T(\mathbf{q}-\boldsymbol{\psi})
+\boldsymbol{\psi}^T{\DIAG}(y^2+\lambda_j)\boldsymbol{\psi}
+\mathbf{q}^T{\DIAG}(\lambda_j)\mathbf{q}.
\end{equation*}
The theorem is proved by injecting the equation into (\ref{eq:dE121}).
\end{proof}
\subsection{The augmented Biot equations}
According to \cite{ou2014on-reconstructi}, the tortuosity function of all porous material can be expressed as
\begin{equation*}
\label{eq:TorJKD}
\alpha_j(\omega)=\frac{\eta \phi}{\rho_{f} \kappa_{j}}\left(\frac{1}{s}\right)+\int_{0}^{\theta_j} \frac{d \sigma_{j}(t)}{1+s t}=:T_{j}(s),\quad s=-i\omega,\quad j=1,3,
\end{equation*}
where $d\sigma_j$, $j=1,3$, is a probability measure, and the upper limit $\theta_j>0$ is related to the inverse of the smallest eigenvalue of the Stokes equations posed in the pore space of the porous material \cite{avellaneda1991rigorous-link-b}. This is derived from the result in \cite{avellaneda1991rigorous-link-b} and the Stieltjes function theory. 

Define the following function
\begin{equation}
\label{eq:NewFunD}
D_{j}(s) :=T_{j}(s)-\frac{ a_{j}}{s}=\int_{0}^{\theta_{1}} \frac{d \sigma_{j}(t)}{1+s t},~a_{j} :=\frac{\eta \phi}{\rho_{f} \kappa_{j}},
\end{equation}
then $D_j(s)$  is a Stieltjes function and  hence can be approximated as \cite{gelfgren1978rational}
\begin{equation*}
\label{eq:StielAppro}
D_{j}(s)=\int_{0}^{\theta_{j}} \frac{d \sigma_{j}(t)}{1+s t} \approx \alpha_{\infty_j}
+\sum_{k=1}^{N_j} \frac{r_{k}^{j}}{s-\vartheta_{k}^{j}},~ s \in \mathbb{C} \backslash\left(-\infty,-\theta_j^{-1}\right],~ j=1,3,
\end{equation*}
where $r_k^{j}>0$ are the residues  and $\vartheta_k^{j}<0$ are the poles. For simplicity, we assume $\B{q}(\bx,0)=\B{0}$. By the Fourier-Laplace transform and (\ref{eq:NewFunD}), with $\B{q}(0)=0$, we have
\begin{equation*}
\begin{aligned}
\label{eq:LapCon}
\mathcal{L}\left[\alpha_j \star \frac{\partial q_{j}}{\partial t}\right](s) &
=\alpha_{\infty_j} s \hat{q}_{j}+\left(a_j+\sum_{k=1}^{N_j} r_{k}^{j}\right) \hat{q}_{j}+\sum_{k=1}^{N_j} r_{k}^{j} \vartheta_{k}^{j} \frac{\hat{q}_{j}}{s-\vartheta_{k}^{j}}, \quad j=1,3.
\end{aligned}
\end{equation*}
Applying the inverse Fourier-Laplace transform gives
\begin{equation*}
\begin{aligned}
\label{eq:InvLapCon}
\left(\alpha_j \star \frac{\partial q_{j}}{\partial t}\right)(\boldsymbol{x}, t)\approx \alpha_{\infty_j}\frac{\partial q_{j}}{\partial t}+
\left(a_j+\sum_{k=1}^{N_j} r_{k}^{j}\right) q_{j}-\sum_{k=1}^{N_j} r_{k}^{j}\left(-\vartheta_{k}^{j}\right) e^{\vartheta_{k}^{j} t} \star q_{j}.
\end{aligned}
\end{equation*}
Introduce the  auxiliary variables $\Theta^{j}_k$, with $k=1,...N_j$
\beq
\Theta_k^{j}(\bx,t):= (-\vartheta_k^{j}) \exp({\vartheta_k^{j}\, t}) \star  q_j,~j=1,3,
\label{theta_def}
\eeq
then the following holds
\begin{numcases}{}
\label{eq:Deriv Aug}
\partial_t\Theta_k^{j}(\bx,t)=\vartheta_k^{j} \Theta_k^{j}(\bx,t)-\vartheta_k^{j} q_j(\bx,t), \\
\label{eq:Aug Initial}
\Theta_k^{j}(\bx,0)=0,\, j=1,3.
\end{numcases}

The constants $r_k^{j}$ and $\vartheta_k^{j}$, $k=1,\cdots,N_j$ are related to the pole-residue approximation for $\alpha_j(s)$ for the frequency range relevant to the point source spectral content; they can be computed with very high accuracy with the algorithms presented in \cite{ou2018augmented}, which is summarized in Section \ref{Algorithm}. The integral representation formula of $\alpha_j(s)$ guarantees that $r_k^{j}>0$ and $\vartheta_k^{j}<0$, $j=1,3$.
\vspace{0.1in}
With these notations, the Darcy's law (\ref{eq:fluidmotion Darcy}) can be written as
\begin{equation}
\label{Eq:HighDarcyTime}
\begin{aligned}
-\nabla p =\rho_f \partial_{t}\mathbf{v}+{\DIAG}(m_j)\partial_{t}\mathbf{q}
+{\DIAG}\left(\frac{\eta}{\kappa_j}+\frac{\rho_f}{\phi}\sum_{k=1}^{N_j} r^{j}_k\right)\mathbf{q}
-\frac{\rho_f}{\phi} \begin{pmatrix}
\sum_{k=1}^{N_1} r_k^1 \Theta_k^1\\
\sum_{k=1}^{N_3} r_k^3 \Theta_k^3
\end{pmatrix}.
\end{aligned}
\end{equation}

For convenience, we introduce the following notation

\begin{eqnarray*}
{D}=\left[
\begin{array}{cccc}
{\partial_x} & {0}   & {\partial_z} & 0\\
{0}    &{\partial_z} & {\partial_x} &0 \\
{0   } & {0}   & 0   &\partial_x\\
{0   } & {0}   & 0   &\partial_z\\
\end{array}
\right]
=\left[
\begin{array}{cccc}
1 & {0}   & 0 & 0\\
{0}    &0 & 1 &0 \\
{0   } & {0}   & 0   &1\\
{0   } & {0}   & 0   &0\\
\end{array}
\right]\partial_x
+\left[
\begin{array}{cccc}
0 & {0}   &1 & 0\\
{0}    &1 & 0 &0 \\
{0   } & {0}   & 0   &0\\
{0   } & {0}   & 0   &1\\
\end{array}
\right]\partial_z.
\end{eqnarray*}
Define
$\boldsymbol{\sigma}=\left(\boldsymbol{\tau}^T,~-p \right)^{T} $, $\mathbf{e}=\left(\boldsymbol{\epsilon}^T,~- \zeta \right)^{T} $ , and $\mathbf{V}=\left(v_1,~  v_3,~  q_1,~ q_3 \right)^{T} $,  then (\ref{partialtau}) and (\ref{partialp}) can be rewritten as
\begin{equation}
\label{eq:DtauM}
\frac{\partial \boldsymbol{\sigma}}{\partial t}=\mathbf{E}D^{T}\mathbf{V}, \quad   \mathbf{E}=\left[
\begin{array}{cc}
{\mathbf{C}^u}        & M\boldsymbol{\beta}\\
M\boldsymbol{\beta}^T &M\\
\end{array}
\right].
\end{equation}
Replacing (\ref{eq:eqdarcy}) with (\ref{Eq:HighDarcyTime}) and combining (\ref{eq:eqmomentum}), we have
\begin{equation}
\label{eq:matrixV}
M_v\frac{\partial \mathbf{V}}{\partial t}=D \mathbf{\boldsymbol \sigma}-M_r \mathbf{V}+R_x\mathbf{\Theta}^x+R_z\mathbf{\Theta}^z,
\end{equation}
where $\boldsymbol{\Theta}^1=(\Theta^1_1,...,\Theta^1_{N_1})^T$ and $\boldsymbol{\Theta}^3=(\Theta^3_1,...,\Theta^3_{N_3})^T$,
\begin{eqnarray*}
{M_v}=\left[
\begin{array}{cccc}
{\rho} & {0}   & {\rho_f} & 0\\
{0}    &{\rho} & {0}      &\rho_f\\
{\rho_f} & {0}   & m_1&0\\
0        & \rho_f& 0  &m_3\\
\end{array}
\right],
~~~~~{M_r}=\left[
\begin{array}{cccc}
{0} & {0}   & {0} & 0\\
{0}    &{0} & {0}      &0\\
{0} & {0}   & n_1&0\\
0        & 0& 0  &n_3\\
\end{array}
\right],
\end{eqnarray*}
\begin{eqnarray*}
{R_x}=\frac{\rho_f}{\phi} \left[
\begin{array}{ccccccccc}
0 & {0}  & 0&... & 0\\
{0} &0 & {0}&...  &0\\
r^x_1 & r^x_2 &r^x_3&...&r^x_{N_x}\\
0 &0  &0     & ...  &0\\
\end{array}
\right],
~~~~~~{R_z}=\frac{\rho_f}{\phi}\left[
\begin{array}{ccccccccc}
0 & {0}  & 0&... & 0\\
{0} &0 & {0}&...  &0\\
0 &0  &0     & ...  &0\\
r^z_1 & r^z_2 &r^z_3&...&r^z_{N_z}\\
\end{array}
\right],
\end{eqnarray*}
with  $n_j=\frac{\eta}{\kappa_j}+\frac{\rho_f}{\phi}\sum_{k=1}^{N_j}r_k^{j}>0$.

Denote $\mathbf{W}=\left(v_1, v_3, q_1,q_3,\tau_{xx},\tau_{zz},\tau_{xz}, -p,\Theta_1^1,\cdots,\Theta^1_{N_1},\Theta^3_1,\cdots,\Theta^3_{N_3} \right)^{T}$. Equations (\ref{eq:DtauM}) and (\ref{eq:matrixV}) lead to the following augmented Biot equations
\begin{eqnarray*}
\begin{aligned}
\left[
\begin{array}{cccc}
M_v & \mathbf{0}   & {\mathbf{0}} & \mathbf{0}\\
\mathbf{0}   &\mathbf{E}^{-1} & \mathbf{0} &\mathbf{0} \\
\mathbf{0} & \mathbf{0}   &\mathbf{I}   &\mathbf{0}\\
\mathbf{0} & \mathbf{0}  & \mathbf{0}  &\mathbf{I}\\
\end{array}
\right]
 \left[
\begin{array}{cccc}
\mathbf{V}\\
\mbox{\boldmath{$\sigma$}}\\
{\mathbf{\Theta}^1}\\
\mathbf{\Theta}^3\\
\end{array}
\right]_t
&=\left[
\begin{array}{cccc}
\mathbf{0}& D& \mathbf{0}&\mathbf{0}\\
D^{T}&\mathbf{0}&\mathbf{0}&\mathbf{0}\\
\mathbf{0} & \mathbf{0}   &\mathbf{0}   &\mathbf{0}\\
\mathbf{0} & \mathbf{0}  & \mathbf{0}  &\mathbf{0}\\
\end{array}
\right]
\left[
\begin{array}{cccc}
\mathbf{V}\\
\mbox{\boldmath{$\sigma$}}\\
{\mathbf{\Theta}^1}\\
\mathbf{\Theta}^3\\
\end{array}
\right]\\
&\quad+
\left[
\begin{array}{cccc}
-M_r&\mathbf{0}&R_x&R_z\\
\mathbf{0} & \mathbf{0}   &\mathbf{0}   &\mathbf{0}\\
-\mathcal{Q}_1 & \mathbf{0}   &\mathcal{P}_x   &\mathbf{0}\\
-\mathcal{Q}_3 & \mathbf{0}  & \mathbf{0}  &\mathcal{P}_z\\
\end{array}
\right]
\left[
\begin{array}{cccc}
\mathbf{V}\\
\mbox{\boldmath{$\sigma$}}\\
{\mathbf{\Theta}^1}\\
\mathbf{\Theta}^3\\
\end{array}
\right],
\end{aligned}
\end{eqnarray*}
or equivalently
\begin{equation}
\label{eq:Fdireqn}
\mathcal{N}_1(\mathbf{M})\partial_t \mathbf{W} - B_1(\bigtriangledown)\mathbf{W}-\mathcal{N}_2(\mathbf{M})\mathbf{W}-\mathbf{S}=\mathbf{0},
\end{equation}
where $\mathbf{M}$ is the set of the parameters, $\mathbf{S}=\left(\mathbf{f}_v, \mathbf{C}^{-1}\mathbf{f}_{\tau}, 0,\cdots,0\right)^{T}$ is the external source and
\begin{eqnarray*}
{\mathcal{Q}_1}=\left[
\begin{array}{cccc}
{0   } & {0}   & \vartheta^1_1   &0\\
{0   } & {0}   & \vartheta^1_2   &0\\
... & ...   & ...        &...\\
{0   } & {0}   & \vartheta^1_{N_1}   &0\\
\end{array}
\right],
~~~{\mathcal{Q}_3}=\left[
\begin{array}{cccc}
{0   } & {0}   &0& \vartheta^3_1   \\
{0   } & {0}   &0& \vartheta^3_2   \\
... & ...   & ...        &...\\
{0   } & {0}   &0 & \vartheta^3_{N_3}   \\
\end{array}
\right],
\end{eqnarray*}
\begin{eqnarray*}
\mathcal{P}_x=\left[
\begin{array}{cccc}
{  \vartheta^1_1 } & {0}   & 0   &0\\
{0   } & {\vartheta^x_2}   & 0  &0\\
... & ...   & ...        &...\\
{0   } & {0}   &0   & \vartheta^1_{N_1}\\
\end{array}
\right],
~~~{\mathcal{P}_z}=\left[
\begin{array}{cccc}
{\vartheta^3_1   } & {0}   &0& 0   \\
{0   } & {\vartheta^3_1 }   &0& 0   \\
... & ...   & ...        &...\\
{0   } & {0}   &0 & \vartheta^z_{N_3}   \\
\end{array}
\right].
\end{eqnarray*}
Note that \eqref{eq:Fdireqn} can be expressed in the following equivalent form
\begin{equation}
\label{eq:direqn}
\partial_t \mathbf{W}(\bM,\bx,t) - \mathcal{N}^{-1}_1(\mathbf{M})B_1(\bigtriangledown)\mathbf{W}-\mathcal{N}^{-1}_1(\mathbf{M})
\mathcal{N}_2(\mathbf{M})\mathbf{W}-\mathcal{N}^{-1}_1(\mathbf{M})\mathbf{S}=\mathbf{0}.
\end{equation}
\subsection{Energy analysis of the augmented Biot equations}
In this section, we conduct the energy analysis of the augmented Biot equations \eqref{eq:direqn} with $\B{S}=0$ and $\B{q}(\bx,0)=0$.
\begin{theorem}
Consider the augmented Biot equation (\ref{eq:direqn}) and define $\mathcal{E}_1$ as (\ref{def:OriginalE1}) and $\mathcal{E}_2$ as (\ref{def:OriginalE2}).  Introduce
\begin{equation*}
\label{FastE3}
\mathcal{E}'_3=  \frac{1}{2} \sum_{j=1,3}\sum_{k=1}^{N_j}\int_{\mathbb{R}^2} \left(\frac{\rho_f r_k^j}{\phi (-\vartheta_k^j)}\right)(\varphi_k^j-q_j)^2 \,dxdz,
\end{equation*}
with $\varphi_k^{j}=\exp({\vartheta_k^{j}\, t}) \star  \frac{\partial q_j}{\partial t}$.
Then the function
\begin{equation*}
\label{def:OriginalEn}
\mathcal{E}'=\mathcal{E}_1+\mathcal{E}_2+\mathcal{E}'_3
\end{equation*}
is the energy function, which satisfies
\begin{equation*}
\frac{d}{d t}\mathcal{E}'=-\int_{\mathbb{R}^2} \mathbf{q}^T {\DIAG}\left(\frac{\eta}{\kappa_j}\right) \mathbf{q}-\sum_{j=1,3}\sum_{k=1}^{N_j} (\varphi_k^j)^2\left(\frac{\rho_f}{\phi}r_k^j\right)\, dxdz\le0.
\end{equation*}
\end{theorem}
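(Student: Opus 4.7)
The plan is to mirror the structure of the proof of Theorem \ref{thm:JKD}, replacing the JKD fractional-derivative dissipation with the rational pole--residue memory terms, so that essentially the same manipulations yield the analogous dissipation identity. The positive definiteness part is almost free: $\mathcal{E}_1$ and $\mathcal{E}_2$ are already shown to be positive definite in Theorem \ref{thm:JKD}, and each weight $\rho_f r_k^j/(\phi(-\vartheta_k^j))$ in $\mathcal{E}'_3$ is strictly positive since $r_k^j>0$ and $\vartheta_k^j<0$.

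The linchpin of the argument is a simple identity relating the two auxiliary variables. From the definition $\Theta_k^j=(-\vartheta_k^j)e^{\vartheta_k^j t}\star q_j$ and $\varphi_k^j=e^{\vartheta_k^j t}\star\partial_t q_j$, an integration by parts in $\tau$ on the convolution defining $\varphi_k^j$, using $q_j(\bx,0)=0$, gives
\begin{equation*}
\varphi_k^j(\bx,t)=q_j(\bx,t)-\Theta_k^j(\bx,t),\qquad j=1,3,\ k=1,\ldots,N_j .
\end{equation*}
In particular $\mathcal{E}'_3=\tfrac12\sum_{j,k}\int (\rho_f r_k^j)/(\phi(-\vartheta_k^j))\,(\Theta_k^j)^2\,dxdz\ge0$, and differentiating $\varphi_k^j$ directly from its convolution definition yields the ODE $\partial_t\varphi_k^j=\partial_t q_j+\vartheta_k^j\varphi_k^j$, which will be the only fact needed about $\varphi_k^j$ dynamics.

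Next I would compute $\frac{d}{dt}(\mathcal{E}_1+\mathcal{E}_2)$ by repeating verbatim the computation of $I_1,I_2,I_3$ and $L_1,L_2,L_3$ in the proof of Theorem \ref{thm:JKD}, testing \eqref{eq:solid momentum} against $\mathbf{v}^T$ and the augmented Darcy law \eqref{Eq:HighDarcyTime} against $\mathbf{q}^T$. The solid--fluid cross term and the $\boldsymbol{\tau}$--$p$ cross term cancel by the symmetry of $\mathbf{C}^{-1}$ exactly as before, and the only change is that the fractional-derivative dissipation in $L_4$ is replaced by the contribution of the rightmost terms in \eqref{Eq:HighDarcyTime}. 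After integration one obtains
\begin{equation*}
\frac{d}{dt}(\mathcal{E}_1+\mathcal{E}_2)=-\int_{\mathbb{R}^2}\mathbf{q}^T\DIAG\!\left(\tfrac{\eta}{\kappa_j}\right)\mathbf{q}\,dxdz-\sum_{j,k}\int_{\mathbb{R}^2}\tfrac{\rho_f r_k^j}{\phi}\bigl(q_j^2-q_j\Theta_k^j\bigr)dxdz.
\end{equation*}

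Finally, I would differentiate $\mathcal{E}'_3$ and use $\partial_t\varphi_k^j-\partial_t q_j=\vartheta_k^j\varphi_k^j$ together with $\varphi_k^j-q_j=-\Theta_k^j$ to get
\begin{equation*}
\frac{d}{dt}\mathcal{E}'_3=-\sum_{j,k}\int_{\mathbb{R}^2}\tfrac{\rho_f r_k^j}{\phi}\bigl[(\varphi_k^j)^2-q_j^2+q_j\Theta_k^j\bigr]dxdz,
\end{equation*}
so that the $q_j^2$ and the $q_j\Theta_k^j$ contributions exactly cancel those from $\frac{d}{dt}(\mathcal{E}_1+\mathcal{E}_2)$, leaving the claimed manifestly nonpositive identity. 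The main obstacle, as in Theorem \ref{thm:JKD}, is bookkeeping: one must choose the weight $\rho_f r_k^j/(\phi(-\vartheta_k^j))$ and the shifted variable $\varphi_k^j-q_j$ in $\mathcal{E}'_3$ precisely so that the uncontrolled cross terms produced by Darcy's law are absorbed. The identity $\varphi_k^j-q_j=-\Theta_k^j$ is what makes this choice work and is the only nonroutine step.
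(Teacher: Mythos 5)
Your proposal is correct and follows essentially the same route as the paper: the same splitting $\mathcal{E}'=\mathcal{E}_1+\mathcal{E}_2+\mathcal{E}'_3$, the same testing of the momentum equation with $\mathbf{v}^T$ and of the augmented Darcy law with $\mathbf{q}^T$, and the same use of the auxiliary ODE $\partial_t\varphi_k^j=\vartheta_k^j\varphi_k^j+\partial_t q_j$ to absorb the cross term into $\frac{d}{dt}\mathcal{E}'_3$, yielding the two manifestly nonpositive dissipation terms. The only cosmetic difference is that you keep the $\Theta$-form of Darcy's law and invoke the explicit identity $\varphi_k^j=q_j-\Theta_k^j$ (valid since $q_j(\cdot,0)=0$), whereas the paper rewrites Darcy's law directly in terms of $\varphi_k^j$ via the Laplace transform and gets the same cancellation by multiplying the ODE by $q_j$ and $\varphi_k^j$ and subtracting; these are algebraically identical steps.
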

\begin{proof}
Note that
\begin{equation*}
\begin{aligned}
\mathcal{L}\left[\alpha_{j} \star \frac{\partial q_{j}}{\partial t}\right](s)&=\alpha_{\infty_{j}} s \hat{q}_{j}+\left(a_{j}+\sum_{k=1}^{N_{j}} r^j_{k}\right) \hat{q}_{j}+\sum_{k=1}^{N_{j}} r^j_{k} \vartheta_{k} \frac{\hat{q}_{j}}{s-\vartheta_{k}} \\
&=\alpha_{\infty_{j}} s \hat{q}_{j}+a_{j} \hat{q}_{j}+\sum_{k=1}^{N_{j}} \frac{r^j_{k}}{s-\vartheta_{k}}\left(s \hat{q}_{j}\right),
\end{aligned}
\end{equation*}
from which we have
\begin{equation}
\label{Eq:QtDarc}
\begin{aligned}
-\nabla p =\rho_f \partial_{t}\mathbf{v}+{\DIAG}(m_j)\partial_{t}\mathbf{q}
+{\DIAG}\left(\frac{\eta}{\kappa_j}\right)\mathbf{q}
+
\begin{pmatrix}
\sum_{k=1}^{N_1} \frac{\rho_f}{\phi}r^{1}_k  {\varphi}^1_k \\
\sum_{k=1}^{N_3} \frac{\rho_f}{\phi}r^{3}_k  {\varphi}^3_k 
\end{pmatrix}
\end{aligned}
\end{equation}
where
\begin{equation*}
\varphi_k^{j}=\exp({\vartheta_k^{j}\, t}) \star  \frac{\partial q_j}{\partial t}.
\end{equation*}
By straightforward computing, we know
\begin{numcases}{}
\label{eq:Deriv Aug1}
\frac{\partial \varphi_k^{j}}{\partial t}(\bx,t)=\vartheta_k^{j} \varphi_k^{j}(\bx,t)+ \frac{\partial q_{j}}{\partial t}, \\
\label{eq:Aug Initial1}
\varphi_k^{j}(\bx,0)=0,\, j=1,3.
\end{numcases}
Multiplying (\ref{Eq:QtDarc}) with $\mathbf{q}^T$ yields
\begin{equation*}
\begin{aligned}
\label{eq:MDqt1}
&\rho_f \mathbf{q}^T \frac{\partial \mathbf{v}}{\partial t}+ \mathbf{q}^T {\DIAG}(m_j)\frac{\partial \mathbf{q}}{\partial t}
+\mathbf{q}^T \nabla p\\
=&- \mathbf{q}^T {\DIAG}\left(\frac{\eta}{\kappa_j}\right) \mathbf{q}
-\mathbf{q}^T \begin{pmatrix}
\sum_{k=1}^{N_1} \frac{\rho_f}{\phi}r^{1}_k  {\varphi}^1_k \\
\sum_{k=1}^{N_3} \frac{\rho_f}{\phi}r^{3}_k  {\varphi}^3_k 
\end{pmatrix}
\end{aligned}
\end{equation*}
Similar to the proof of theorem \ref{thm:JKD}, we can derive
\begin{equation}
\frac{d}{dt}(\mathcal{E}_1+\mathcal{E}_2)=\int_{\mathbb{R}^2} - \mathbf{q}^T {\DIAG}\left(\frac{\eta}{\kappa_j}\right) \mathbf{q}
- \mathbf{q}^T \begin{pmatrix}
\sum_{k=1}^{N_1} \frac{\rho_f}{\phi}r^{1}_k  {\varphi}^1_k \\
\sum_{k=1}^{N_3} \frac{\rho_f}{\phi}r^{3}_k  {\varphi}^3_k 
\end{pmatrix}
dxdz.
\label{aug_energy_1_2}
\end{equation}
To calculate the second integral in the equation above, we multiply (\ref{eq:Deriv Aug1}) with $q_j$ and ${\varphi}^j_k$ to obtain
\begin{numcases}{}
\label{eq:Deriv Aug11}
{q_j} \partial_t {\varphi}_k^j(\bx,t)- {q}_j  {\vartheta}^j_k  {\varphi}^j_k(\bx,t)- {q}_j \partial_t {q}_j=0, \\
\label{eq:Aug Initialbis}
{\varphi_k^j} \partial_t {\varphi}_k^j(\bx,t)- {\varphi_k^j}  {\vartheta}^j_k  {\varphi}^j_k(\bx,t)- {\varphi_k^j}\partial_t {q}_j=0.
\end{numcases}
By subtracting (\ref{eq:Aug Initialbis}) from (\ref{eq:Deriv Aug11}) and rearranging terms, the following equation holds for every fixed  $j=1,3$ and $k=1,\dots, N_j$, 
\begin{equation*}
-q_j \vartheta^j_k \varphi_k^j=\frac{1}{2}\pr_t(\varphi_k^j-q_j)^2 
-\vartheta_k^j(\varphi_k^j)^2 \Longrightarrow -q_j \left(\frac{\rho_f}{\phi}\right)r_k^j\varphi_k^j=\frac{1}{2}\left(\frac{\rho_f r_k^j}{\phi \vartheta_k^j}\right)\pr_t(\varphi_k^j-q_j)^2 -(\varphi_k^j)^2\left(\frac{\rho_f}{\phi}r_k^j\right)
\end{equation*}
Summing over $k$ and noting that $\vartheta_k^j<0$ and integrating in $\mathbb{R}^2$, we conclude that
\begin{equation*}
\begin{aligned}
\int_{\mathbb{R}^2} - \mathbf{q}^T \begin{pmatrix}
\sum_{k=1}^{N_1} \frac{\rho_f}{\phi}r^{1}_k  {\varphi}^1_k \\
\sum_{k=1}^{N_3} \frac{\rho_f}{\phi}r^{3}_k  {\varphi}^3_k 
\end{pmatrix}\, dxdz
&=\int_{\mathbb{R}^2}   \sum_{j=1,3}\sum_{k=1}^{N_j} \frac{1}{2}\left(\frac{\rho_f r_k^j}{\phi \vartheta_k^j}\right)\pr_t(\varphi_k^j-q_j)^2 -(\varphi_k^j)^2\left(\frac{\rho_f}{\phi}r_k^j\right) \,dxdz\\
&=-\frac{d }{dt}\mathcal{E}_3'-\int_{\mathbb{R}^2} \sum_{j=1,3}\sum_{k=1}^{N_j} (\varphi_k^j)^2\left(\frac{\rho_f}{\phi}r_k^j\right)\, dxdz.
\end{aligned}
\end{equation*}
The theorem is proved by substituting the above result into \eqref{aug_energy_1_2}.
\end{proof}
\section{Full waveform inversion of the augmented Biot equations}\label{sec adjoint equation}
The inverse problem is defined as a constrained minimization of the misfit function $\chi(\B{M},\B{W})$ with an 
evolution PDE defining relation between the sought property field $\B{M}$ and the observed wave field $\B{W}$, see 
\eqref{eq:optdef} below. A gradient type  minimization algorithm for this constrained problem takes advantage of 
the adjoint problem which can be solved with the same solver as the forward problem. Therefore,
in this section, we derive the adjoint problem for the FWI of the augmented Biot equation~\eqref{eq:direqn} with fixed $N_1$ and $N_3$. 

We introduce the notation used in this section. Note that in this section, $\mathcal{L} $ denotes a fuctional, instead of the Fourier-Laplace transform. The Frech\'{e}t derivative with respect to $\B{u}$ of a functional $\mathcal{L}(\B{u})$ acting on a suitable test function $\delta\B{u}$ is denoted by 
$d_\B{u}\mathcal{L}(\B{u})[\delta \B{u}]$. 
The gradient with respect to $\B{u}\in\R^p$ of a function $f:\R^p\to\R$ is denoted by $\prg_\B{u}f$ and
represented by a row vector. 
The gradient of a vector-valued function $\B{F}:\R^p\to\R^q$ with respect to $\B{u}\in\R^p$  
is represented by a $q\times p$ matrix defined as
\[
[\prg_{\B{u}} \B{F}]_{ij}=\frac{\partial F_i}{\partial u_j},\, i=1,\dots,q,\, j=1,\dots,p\,.
\]
In terms of this notation, the usual product rule 
for vector-valued functions 
$\B{F},\B{G}:\B{u}\in\R^p\mapsto\R^q$ represented
by column vectors  is expressed
\[
[\prg_{\B{u}} (\B{F}^T\B{G})]_{j1}=\sum_{k=1}^q \frac{\pr F_k}{\pr u_j} G_k +F_k \frac{\pr G_k}{\pr u_j}\,,
\;\;\mbox{or in matrix notation}\;
\prg_\B{u}(\B{F}^T\B{G})= (\prg_\B{u}\B{F})^T\B{G} + \B{F}^T \prg_\B{u}\B{G}\,,
\]
and the chain rule for 
$\B{u}:\B{m}\in\R^n\mapsto\R^n$ reads
\[
[\prg_{\B{m}}\B{F}(\B{u}(\B{m}))]_{ij}=\sum_{k=1}^n \frac{\partial F_i}{\partial u_k} \frac{\pr u_k}{\pr m_j}\,,
\;\;\mbox{or in matrix notation}\;
\prg_{\B{m}}\B{F}(\B{u}(\B{m}))=\prg_{\B{u}} \B{F}\prg_{\B{m}} \B{u}\,.
\]
%
The usual subscript notation for the partial derivative $f_x:=\frac{\partial f}{\partial x}$ is also used. The transpose of a matrix $\B{A}$ is denoted as $\B{A}^T$ and $\B{F}\cdot\B{G}\equiv\B{F}^T\B{G}$ represents the scalar product of two vectors in the corresponding Euclidean space. 
We introduce the time-spatial inner product of two vector-valued functions $\B{h},\B{g}:\Omega\times [0,T]\to \R^q$ 
%
\begin{equation}
\label{eq:inner}
\langle \B{h},\B{g}\rangle_{\Omega \times T}:=\int_0^T\int_{\Omega}\B{h}^T(\mathbf{x},t) \B{g}(\mathbf{x},t)d\mathbf{x}dt\,.
\end{equation}
\subsection{The misfit function for the FWI}
Given the observed data $\mathbf{d}=\mathbf{d}(\B{y}_r,t)$  at receivers placed in the
positions $\mathbf{y}_r$, $r=1,\cdots,N$, the misfit functional is defined by
\begin{eqnarray}
\label{eq:misfit}
\chi(\bW,\bM):=\int_0^T\int_\Omega \frac{1}{2} \| \mathcal{R}\bW(\bM; \B{x},t) -\B{d}(\B{x},t)\|^2 \mu(dx)dt=:
\langle \Phi, \B{1}\rangle_{\Omega\times T}
\label{eq:def_f}
\end{eqnarray}
where $\mu(dx)$ is a measure representing the receivers, typically the sum of Dirac masses 
$\mu(dx)=\sum_{r=1}^N \delta_{y_r}$ concentrated at points $y_r\in\Omega$. For the purpose of this derivation
we can assume that these are appropriately regularized so the integral in \eqref{eq:def_f} is well defined.
We denote $\mathcal{R}$ the restriction operator that sets the 'unmeasurable' or unavailable components 
in  $\bW$  to be zero. 
The given vector $\B{d}$ has the same dimension and format as $\bW$ by augmenting the data vector with 
zeros for the components which have no available data. Therefore, the components in the 
discrepancy $\mathcal{R}\bW-\B{d}$ which correspond to the non-measurable entries in $\bW$ must equal to zero.  

We consider the following constrained optimization problem for the property field $\B{M}$ in a suitable space
of admissible functions
\begin{eqnarray}
&&\min_\bM ~ \chi(\bW,\bM)\,, \label{eq:optdef}\\
&\mbox{subject to:}\; &~\pr_t\bW + \mathcal{A}(\bM,\bW,\nabla_{\bx}\bW) = 0\,,\;\;\;\;\mbox{ (PDE),} \label{eq:constraintPDE}\\
&& \B{g}(\bM,\bx):=\bW(\bx,0;\bM)-\boldsymbol{\phi}(\bx;\bM)=\B{0}\,,\;\;\;\; \mbox{ (IC)} \label{eq:constraintIC}
\end{eqnarray}
where the mapping 
$\mathcal{A}:(\mathbf{m},\mathbf{w},\boldsymbol{\zeta})\in\R^m\times\R^n\times\R^{3\times n}\mapsto\R^n$
is defined as in \eqref{eq:direqn}, specifically
\begin{equation}\label{eq:Adef}
\mathcal{A}(\bM,\bW,\nabla_{\bx}\bW) =\B{A}(\bM(\bx))\bW_x+\B{B}(\bM,\bx)\bW_z+\B{H}_1(\bM,\bx)\bW+\B{H}_2(\bM,\bx)\B{S}(\bM,\bx,t)\,.
\end{equation}
 For the sake of clarity we also abbreviate the equation \eqref{eq:constraintPDE} as
 $\mathcal{F}(\bM,\partial_t\bW,\bW,\nabla_{\bx}\bW) = 0$.

\subsection{Adjoint problem and variation of the misfit function}
A gradient type minimization iteratively updates values of the field $\B{M}(x)$ in the feasible 
set using descent directions derived from the variation  $\delta_\sbM\chi$ of the misfit function. 
The first task is thus derivation of admissible  directions that belong to the tangent space of the constraints
\eqref{eq:constraintPDE}-\eqref{eq:constraintIC}.
Given a perturbation $\delta\mathbf{M}$, the solution of the forward problem results in  a perturbation
(variation) $\delta_\sbM\bW$ and thus a perturbation $\delta_\sbM\chi$ of the misfit function along
the constraint \eqref{eq:constraintPDE}. 

Assuming sufficient regularity of the misfit functional $\chi$ and using the Taylor expansion for $\chi$, 
the infinitesimal variation in the direction of $\delta\bM$ along the constraints is given by
the Fr\'echet derivative (taking into account that $\Phi$ does not depend explicitly on $\bM$)
\begin{equation}\label{eq:Tayl misfitbis}
\delta_\sbM \chi:=d_{\sbM}\chi[\delta\B{M}] = 
\langle \prg_{\mathbf{w}}\Phi,\delta_\sbM\B{W}\rangle_{\Omega\times T}\,,
\end{equation}
where the function $\B{V}:=\delta_\sbM\B{W}$ solves the variation equation
for the constraint \eqref{eq:constraintPDE} under the perturbation $\delta\bM$, i.e.,
\begin{equation}\label{eq:varW}
\begin{split}
  \pr_t \B{V} + \nabla_{\mathbf{w}} \mathcal{A}(\bM,\bW,\nabla_\bx\bW)\cdot\B{V} 
     + \nabla_{\boldsymbol{\zeta}} \mathcal{A}(\bM,\bW,\nabla_\bx\bW) : \nabla_\bx\B{V}
   &= -\nabla_{\mathbf{m}}\mathcal{A}(\bM,\bW,\nabla_\bx\bW)\cdot \delta\bM\,, \\
   \B{V}(x,0) &= \nabla_{\mathbf{m}}\B{g}(\bM,\bx)\,,
\end{split}
\end{equation}
along with the solution $\bW(x,t;\bM)$ of \eqref{eq:constraintPDE}-\eqref{eq:constraintIC}.
Thus solving \eqref{eq:constraintPDE}-\eqref{eq:constraintIC}, \eqref{eq:varW} for the functions $(\bW,\B{V})$ and taking into account the specific form of $\Phi$ in \eqref{eq:misfit} we have
\begin{eqnarray}\label{eq:Tayl misfit}
\delta_\sbM \chi= \sum_{r=1}^N \int_\Omega \int_0^T\left(\mathcal{R}\bW(\bM,\B{x},t)-\bd(\B{x},t)  \right)^T \B{V} \,\delta_{y_r}(\B{x}) d\bx\,dt\,.
\end{eqnarray} 
An alternative way to compute the variation $\delta_\sbM\chi$ is to use the formulation of
the constrained minimization problem \eqref{eq:optdef} with the Lagrangian functional
\begin{equation}\label{eq:Lagrangian}
\mathcal{L}(\mathbf{M},\B{W},\blam,\bmu)=
\langle \Phi,\B{1}\rangle_{\Omega\times T}
+\langle \blam,\mathcal{F}\rangle_{\Omega \times T}+ \int_\Omega \bmu^T \B{g} d\bx\,,
\end{equation}
where we introduced
the Lagrange multipliers $\blam(\bx,t)\in\R^n$ and $\bmu(\bx)\in\R^n$. In this formulation we can vary $\delta\bM$ and $\delta\bW$ independently thus obtaining the Fr\'echet derivative $d\mathcal{L}$ at a point $(\bM,\bW,\blam,\bmu)$ acting on the increment $(\delta\bM,\delta\bW)$, i.e.,
\begin{equation}\label{eq:dLagrangian}
    \begin{split}
        d\mathcal{L}[(\delta\bM,\delta\bW)] =& \la \nabla_{\mathbf{w}}\Phi,\delta\bW\ra_{\Omega \times T} +
        \la \blam,d_\sbW\mathcal{F}[\delta\bW]\ra_{\Omega \times T} + \la \blam,d_\sbM\mathcal{F}[\delta\bM]\ra_{\Omega \times T} \\
        &+ \int_\Omega \bmu^T d_\sbW\B{g}[\delta\bW] \,d\bx + \int_\Omega \bmu^T d_\sbM\B{g}[\delta\bM] \,d\bx\,.
    \end{split}
\end{equation}
From the definition of the mapping $\mathcal{F} = \partial_t \bW + \mathcal{A}$ and $\B{g}$ we have
\begin{equation*}
    \begin{split}
        d_\sbW\mathcal{F}[\delta\bW] &= \partial_t\delta\bW + \nabla_{\mathbf{w}}\mathcal{A}\cdot\delta\bW + 
                                        \nabla_{\boldsymbol{\zeta}}\mathcal{A}:\nabla_\bx\delta\bW \\
        d_\sbM\mathcal{F}[\delta\bM] &= \nabla_{\mathbf{m}}\mathcal{A}\cdot\delta\bM\,,\\
        d_\sbW\B{g}[\delta\bW] &= \delta\bW\,,\;\;\;\;\;\;d_\sbM\B{g}[\delta\bM] = -\nabla_{\mathbf{m}}\boldsymbol{\phi}\cdot\delta\bM\,.
    \end{split}
\end{equation*}
Combining the first and second term in \eqref{eq:dLagrangian} and integrating by parts in the expression for $d_\sbW\mathcal{F}[\delta\bW]$ we obtain
\[
\begin{split}
\la \nabla_{\mathbf{w}}\Phi,\delta\bW\ra_{\Omega \times T} +
        \la \blam,d_\sbW\mathcal{F}[\delta\bW]\ra_{\Omega \times T} &= 
\la -\pr_t\blam +\blam^T\nabla_{\mathbf{w}}\mathcal{A} - \DIV(\blam^T\nabla_{\boldsymbol{\zeta}}\mathcal{A}) + \nabla_{\mathbf{w}}\Phi,\delta\bW\ra_{\Omega\times T} \\
&+ \int_\Omega (\blam^T(\bx,T)\delta\bW(\bx,T) - \blam^T(\bx,0)\delta\bW(\bx,0)\,d\bx\,,
\end{split}
\]
where we set $\blam(\bx,t)=0$ for $\bx\in\partial\Omega$ to remove the boundary terms from the integration by parts.
We denote the solution $\blam_*(\bx,t)$ of the adjoint problem
\begin{equation}\label{eq:adjProb}
\begin{split}
   &\pr_t\blam -\blam^T\nabla_{\mathbf{w}}\mathcal{A} + \DIV(\blam^T\nabla_{\boldsymbol{\zeta}}\mathcal{A}) = \nabla_{\mathbf{w}}\Phi\,,\;\;\;\bx\in\Omega\,,\;\;t<T \\
   &\blam(\bx,T) = 0\,,\;\;\;\;\blam(\bx,t)|_{\partial\Omega} = 0\,,
\end{split}
\end{equation}
and set the multiplier $\bmu_*(\bx) = \blam_*(x,0)$. Hence we have
\[
\int_\Omega \bmu^T d_\sbW\B{g}[\delta\bW] \,d\bx + \int_\Omega \bmu^T d_\sbM\B{g}[\delta\bM] \,d\bx = \int_\Omega \blam_*(x,0)\cdot\delta\bW(\bx,0)\,d\bx - \int_\Omega \blam_*(x,0)\cdot(\nabla_{\mathbf{m}}\boldsymbol{\phi}\delta\bM(\bx))\,d\bx\,.
\]
Substituting back to \eqref{eq:dLagrangian} we obtain the expression for the variation $\delta_\sbM\chi$ of the misfit functional $\chi$ evaluated at the solution $\bW_*$ of the forward problem \eqref{eq:constraintPDE}-\eqref{eq:constraintIC} as
\begin{equation}\label{eq:varCHI}
    \delta_\sbM\chi = d\mathcal{L}(\bM,\bW_*,\blam_*,\bmu_*)[\delta\bM] = \la\blam_*,\nabla_{\mathbf{m}}\mathcal{A}\delta\bM\ra_{\Omega\times T} - \int_\Omega \blam_*(x,0)\cdot\nabla_{\mathbf{m}}\boldsymbol{\phi}\delta\bM(\bx)\,d\bx\,.
\end{equation}
Returning back to the specific form of $\mathcal{A}$, cf.\eqref{eq:Adef}, and $\Phi$, cf. \eqref{eq:misfit}, we have $\blam_*$ solving
\begin{empheq}[left={\mbox{(adjoint problem)}\empheqlbrace}]{align*}
&{\pr_t \blam}+ \pr_x \left( \B{A}^T \blam\right)+\pr_z \left( \B{B}^T \blam\right)-\B{H}_1^T\blam=
\sum_{r=1}^N  \left[\B{R}\bW(\B{x},t)- \B{d}(\B{x},t)\right]\delta(\B{y}_r)
\label{adj_eq},\\
&\blam(\bx,T)=\B{0}, \quad \blam(\bx,t)|_{\pr\Omega}=\B{0}\,, 
\end{empheq}
and $\bmu_*(\bx)=\blam_*(\bx,0)$.
%
Solving the adjoint problem for $\blam_*$ eliminates the need for computing the solution $\B{V}={\delta_\sbM \bW}$ of the variation equation \eqref{eq:varW} and it gives an explicit expression for the descent direction in the gradient-type minimization algorithm. From \eqref{eq:varCHI} we have
\beqas
 \delta_\sbM \chi&=&\int_\Omega  \sum_{k=1}^q  \delta M_k(\bx)  \int_0^T  \blam_*^T \left\{ \pr_{m_k} \B{A}\bW_x  
 + \pr_{m_k} \B{B}\bW_z+\pr_{m_k} \B{H}_1\bW+\pr_{m_k} \B{H}_2\B{S}\right.\\
 && \left. +\B{H}_2\pr_{m_k} \B{S} -\pr_{m_k} \boldsymbol{\phi}(\bx) \right\} dt\, d\bx\,,
 \eeqas
where $\delta M_k$, $k=1,\dots,q$ denotes components of the increment $\delta\bM$. Thus we have an explicit formula for the directional derivative $\delta_\sbM\chi = \la\B{G},\delta\bM\ra_\Omega$ where 
the vector- valued function $\B{G}(\bx)\in\R^q$ represents the Fr\'echet derivative and is given component-wise as
\begin{equation}\label{eq:gradient}
[\B{G}]_k= \int_0^T  \blam_*^T \left\{ \pr_{m_k} \B{A}\bW_x  
 + \pr_{m_k} \B{B}\bW_z+\pr_{m_k} \B{H}_1\bW+\pr_{m_k} \B{H}_2\B{S}+\B{H}_2\pr_{m_k} \B{S} -\pr_{m_k} \boldsymbol{\phi}(\bx) \right\} dt\,.
 \end{equation}
Therefore, the direction of descent for minimizing the misfit function $\chi$ and updating the field $\bM$ is given by $-\B{G}$.
 \section{\label{Algorithm}Numerical method for computing $\{\vartheta_k\}$ and $\{r_k\}$}
To make this paper selt-contained, we summarize in this section the numerical method for poles $\vartheta_k$ and residues $r_k$, $k=1,\dots,M$, by using the two-sided residue method; the calculation should be carried out in an arbitrary precision arithmetic system such as Advanpix (a multiprecision Matlab toolbox) to ensure the returned poles and residues are of correct signs \cite{ou2018augmented}. The two-sided residue interpolation method involves solving a linear system of size $M$ from given values of $\alpha$ at $M$ distinct frequencies and  automatically interpolates $\alpha$ at $\omega=0$ and $\omega=\infty$.  Depending on the spectral content of the source term $\B{S}$, the interpolation grid is either equally spaced or log-spaced in the frequency range.

Given the M interpolation data $(z_k,u_k,v_k)\in \mathbb{C}^+\times\mathbb{C}^{p\times q}\times\mathbb{C}^{p\times q}$, where $\mathbb{C}^+$ denotes the upper-half complex plane, we seek a $p\times p$ matrix valued function $G(z)$ of the form
\begin{equation}
\label{G def}
G(z)=\int_0^{\infty}\frac{d\mu(t)}{t-z},
\end{equation}
where $\mu$ is a positive $p\times p$ matrix valued measure, such that
\begin{equation}
\label{G cond}
G(z_i)u_i=v_i, i=1,\cdots,M.
\end{equation}
Define the Hermitian matrices $S_1$ and $S_2$ via
\begin{equation}
\left(S_{1}\right)_{i j}:=\frac{u_{i}^{*} v_{j}-v_{i}^{*} u_{j}}{z_{j}-\overline{z_{i}}},
~\left(S_{2}\right)_{i j} :=\frac{z_{j} u_{i}^{*} v_{j}-\overline{z_{i}} v_{i}^{*} u_{j}}{z_{j}-\overline{z_{i}}},~ i, j=1, \ldots, M,
\end{equation}
then $S_1$ and $S_2$ are positive semidefinite. Here the sup-script  $*$ denotes the conjugate transpose operator.  Conversely, if $S_1$ is positive and $S_2$ is positive semidefinite, then
\begin{equation}
\begin{aligned}
\label{eq:GDefAppro}
G(z) :=&-C_{+}\left(z S_{1}-S_{1} A-C_{+}^{*} C_{-}\right)^{-1} C_{+}^{*}=C_{+}\left(S_{2}-z S_{1}\right)^{-1} C_{+}^{*}\\
&=\sum_{k=1}^{qM}\left(\frac{1}{d_{k}-z}\right) C_{+} S_{1}^{-\frac{1}{2}} \boldsymbol{x}_{k} \boldsymbol{x}_{k}^{*} S_{1}^{-\frac{1}{2}} C_{+}^{*},
\end{aligned}
\end{equation}
is a solution to the interpolation problem, where $\boldsymbol{x}_j$ is the eigenvector of $S_1^{\frac{-1}{2}}S_2S_1^{\frac{-1}{2}}$ corresponding to the eigenvalue $d_k$ and $C_{-} :=\left({u_{1}}, {\cdots},{u_{M}}\right)$, $C_{+} := \left({v_{1}}, {\cdots}, {v_{M}}\right)$, $A :=\operatorname{diag}\left(z_{i}\right)$.
Specializing the results to the function $D(s)$, with change of variables $-i \omega=:s=-\frac{1}{z}$, we have $p=q=1$  and
\begin{equation}
\label{eq:D Twosided}
D(s)=\int_{0}^{\theta} \frac{d \sigma(t)}{1+s t}=(-z) \int_{0}^{\theta} \frac{d \sigma}{t-z},~s=-\frac{1}{z},
\end{equation}
then
\begin{equation}
\begin{aligned}
D(s)-\alpha_{\infty}&=(-z)\left(\int_{0}^{\theta} \frac{d \sigma(t)}{t-z}-\frac{\alpha_{\infty}}{-z}\right)\\
&=(-z)\left(\int_{0}^{\theta} \frac{d \sigma(t)}{t-z}-\int_{0}^{\theta} \frac{\alpha_{\infty} \sigma(t)}{t-z}\right)
:=(-z)G_{new}(z),
\end{aligned}
\end{equation}
which is Stieltjes function owing to $\sigma$ has a Dirac measure of strength $\alpha_{\infty}$.
Using (\ref{eq:GDefAppro}), we can compute the poles and residues for $G_{new}$ as follows, which amounts to solving a generalized eigen value problem.
\begin{eqnarray}
-i\omega_k=:s_{k} &=& -\frac{1}{z_{k}},~ u_{k}=\frac{1}{s_{k}},~ k=1 \ldots M, \\
v_{k} &=& D\left(s_{k}\right)-\alpha_{\infty}, k=1\ldots M,\\
\left(S_{1}\right)_{pq} &=& \frac{-s_{q} D\left(s_{q}\right)+s_{p}^{*} D^{*}\left(s_{p}\right)}{s_{p}^{*}-s_{q}}-\alpha_{\infty},~ p,\,q=1 \ldots M,\\
\left(S_{2}\right)_{pq} &=& \frac{-D\left(s_{q}\right)+D^{*}\left(s_{p}\right)}{s_{q}-s_{p}^{*}},~ p,\,q=1 \ldots M,\\
(-z)G_{new}(z) &=& {C}_+ \left(S_1+s S_2\right)^{-1} {C}_+^*.
\end{eqnarray}

In terms of the generalized eigenvalues/eigenvectors $[\B{V},\Phi]:=eig(S_1,S_2)$, where $\B{V}$ is the matrix of generalized  vectors and $\Phi$ the diagonal matrix of generalized eigenvalues such that
\beq
S_1 \B{V}=S_2 \B{V} \Phi,
\label{Algo_2}
\eeq
and taking into account of the simultaneous diagonalization property
\beqa
\begin{aligned}
\B{V}^* S_1 \B{V}&=\Phi,\quad \B{V}^* S_2 \B{V}&=\B{I},
\end{aligned}
\eeqa
we obtain
\beq
D(s)=\alpha_\infty+\sum_{k=1}^{N} \frac{C_+ \B{V}_1(:,k)\B{V}_1(:,k)^* C_+^*}{s+ \Phi(k,k)}.
\label{two-sided}
\eeq
The poles $\vartheta_k$ and residues $r_k$, $k=1,\dots,M$, are given by
\begin{eqnarray}
\vartheta_k &=& -  \Phi(k,k), \\
r_k &=&C_+ \B{V}(:,k)\B{V}(:,k)^* C_+^*.
\end{eqnarray}
\section{Conclusion and future work}\label{conclu}
In this paper, we consider the time-domain FWI for the poroelastic wave equations, whose dispersive and dissipative  behaviors are encoded in the auxiliary variables $\Theta_k$, $k=1,\dots,N$. Since all possible memory kernel functions that satisfy the causality conditions can be represented as an integral presented here, the FWI framework proposed here is general enough to handle all porous material with pore geometry regular enough for the spectral theory of Stokes equation to hold true. The energy analyses for the popular Biot-JKD equations and for the most general augmented Biot equations have been also presented in this paper; they play an important  role in understand the stability of the adjoint problem, which is derived in this paper. The adjoint problem involves solving a PDE backward in time for the Lagrange multiplier function $\blam(\bx,t)$. In this framework, the minimization of the misfit function can be solved by using the gradient calculated from using the solutions of the forward and the adjoint problem. 

This paper lays the foundation for a thorough mathematical and numerical analysis for the time-domain FWI of dispersive and dissipative poroelastic wave equations.  On the mathematical side, the future work includes the analysis of the minimization problem such as the uniqueness and existence of the minimizer convergence rate, the choice of regularization terms and the error estimates. The question of uniqueness of the minimizer  is highly non-trivial because a given memory kernel can be approximated equally well by different choices of pole-residue sets; this line of research will involve the definition of equivalent classes of pole-residue and factor it into the iteration steps. We would also like to expand our energy analysis to the case where the initial conditions of  the fluid relative velocity $\B{q}$ is not zero. On the numerical side, the immediate future work includes the development of an efficient solver for the adjoint problem and the strategy for handling the time integral for computing the gradient $\B{G}$; for handing the memory demand, a strategy such as the CARFS \cite{doi:10.1190/geo2016-0082.1} will be considered.
\vspace{0.2in}

\noindent{{\bf Acknowledgement:} The work of MYO is partially sponsored by US NSF grants DMS-1413039 and DMS-1821857. The work of J. Xie is partially supported by China NSFC Grants No. 12171274) and No. 11871139.}
\bibliographystyle{plain}

\end{document}